\documentclass[11pt,a4paper]{article}
\usepackage[T1]{fontenc}
\usepackage{lmodern}
\usepackage[margin=29mm]{geometry}
\usepackage{amsmath,amssymb,amsthm,mathtools}
\usepackage{microtype}
\usepackage{needspace}
\usepackage{enumitem}
\usepackage{xcolor}
\usepackage[unicode,colorlinks=true,linkcolor=blue!45!black,citecolor=blue!45!black,urlcolor=blue!45!black]{hyperref}
\hypersetup{pdftitle={Local Reflexivity and Exactness for Reduced C*-Algebras of Wreath Products},pdfsubject={Research draft},pdfauthor={Pingchuan Dong and Zhe Dong}}
\numberwithin{equation}{section}
\newtheorem{theorem}{Theorem}[section]
\newtheorem{proposition}[theorem]{Proposition}
\newtheorem{lemma}[theorem]{Lemma}
\newtheorem{corollary}[theorem]{Corollary}
\theoremstyle{definition}
\newtheorem{definition}[theorem]{Definition}
\theoremstyle{remark}
\newtheorem{remark}[theorem]{Remark}
\newcommand{\Cr}{C_r^*}
\newcommand{\Cu}{C_u^*}
\newcommand{\C}{\mathbb C}
\newcommand{\Z}{\mathbb Z}
\newcommand{\N}{\mathbb N}
\newcommand{\F}{\mathbb F}
\newcommand{\T}{\mathbb T}
\newcommand{\wrp}{\wr}
\newcommand{\supp}{\operatorname{supp}}

\newcommand{\id}{\operatorname{id}}

\setlist[enumerate]{label=\textup{(\roman*)},leftmargin=2.4em,itemsep=3pt}
\allowdisplaybreaks[2]

\title{Local Reflexivity and Exactness for\texorpdfstring{\\}{ }Reduced \texorpdfstring{$C^*$}{C*}-Algebras of Wreath Products}
\author{Pingchuan Dong and Zhe Dong}
\date{}

\begin{document}
\maketitle
\begin{abstract}
We prove that, for every countable non-exact discrete group $H$ and every nontrivial countable discrete group $K$, the reduced group $C^*$-algebra $C_r^*(K\wr H)$ is not locally reflexive. These provide the first known examples of non-locally reflexive reduced group $C^*$-algebras, settling a longstanding existence question. Moreover, for every nontrivial countable exact discrete group $K$, we establish that local reflexivity of $C_r^*(K\wr H)$ is equivalent to exactness of $H$, and hence to exactness of $C_r^*(K\wr H)$. This gives a partial affirmative answer to Kirchberg's question of whether local reflexivity implies exactness for $C^*$-algebras. As applications, we construct finitely generated groups with the Haagerup property, as well as finitely generated residually finite groups, whose reduced group $C^*$-algebras are not locally reflexive. In particular, neither the Haagerup property nor coarse embeddability into Hilbert space guarantees local reflexivity of the associated reduced group $C^*$-algebra.
\end{abstract}
\noindent\textbf{Keywords.} Local reflexivity; exact group; reduced group $C^*$-algebra; wreath product.\par
\medskip
\noindent\textbf{2020 Mathematics Subject Classification.} Primary 46L05; Secondary 20F65, 37B10, 46L07.

\section{Introduction}

Local reflexivity, introduced by Effros and Haagerup \cite{EH}, is a finite-dimensional approximation property of $C^*$-algebras. Exactness, introduced by Kirchberg \cite{K78}, implies local reflexivity \cite{K94}, but whether the converse holds remains open in general \cite{BO,Kirchberg}. Consequently, knowing that a discrete group $G$ is non-exact does not by itself establish that its reduced group $C^*$-algebra $\Cr(G)$ fails local reflexivity. The existence question addressed here is:
\begin{quote}
\emph{Does there exist a countable discrete group $G$ such that $\Cr(G)$ is not locally reflexive?}
\end{quote}
Kirchberg's 2011 list of open problems asks, more specifically, whether $\Cr(G)$ fails local reflexivity when $G$ is a Gromov non-exact group \cite{Kirchberg}.

Failure of local reflexivity was already known for other completions of group algebras. Effros and Haagerup showed that the full group $C^*$-algebra $C^*(\F_2)$ is not locally reflexive \cite{EH}. Ruan and Wiersma subsequently proved failure of local reflexivity for classes of exotic group $C^*$-algebras, including $C_{\ell^p}^*(\F_2)$ for $2<p<\infty$ \cite{RW}. An exotic completion $C_\pi^*(G)$ fits into canonical proper quotient maps
\[
 C^*(G)\twoheadrightarrow C_\pi^*(G)\twoheadrightarrow\Cr(G).
\]
Local reflexivity passes to $C^*$-quotients \cite[Proposition~5.3]{EH}, but its failure need not: $\Cr(\F_2)$ is exact and locally reflexive, although $C^*(\F_2)$ is not \cite{BO,EH}. Thus the full and exotic examples do not answer the existence question for reduced group $C^*$-algebras.

Our main result answers this existence question affirmatively by transferring the obstruction to local reflexivity from uniform Roe algebras to reduced group $C^*$-algebras of wreath products. The construction works for every countable non-exact acting group and every nontrivial countable lamp group, including arbitrary nontrivial finite groups.

For countable discrete groups $K$ and $H$, write $K\wrp H=K^{(H)}\rtimes H$ for the restricted wreath product, where $K^{(H)}$ consists of the finitely supported functions from $H$ to $K$, and $H$ acts by $(h\cdot a)(x)=a(h^{-1}x)$. Our main result is the following.

\medskip
\noindent\textbf{Main Theorem (Theorem~\ref{thm:main}).}
\emph{Let $H$ be a countable non-exact discrete group. For every nontrivial countable discrete group $K$, the reduced group $C^*$-algebra $\Cr(K\wrp H)$ is not locally reflexive.}
\medskip

For each fixed nontrivial countable exact group $K$, this gives the characterization
\[
 \begin{aligned}
 H\text{ is exact}
 &\quad\Longleftrightarrow\quad C_r^*(K\wrp H)\text{ is exact}\\
 &\quad\Longleftrightarrow\quad \Cr(K\wrp H)\text{ is locally reflexive}
 \end{aligned}
\]
for every countable discrete group $H$; see Corollary~\ref{cor:main}. In particular, this gives a partial affirmative answer to Kirchberg's question of whether local reflexivity implies exactness for $C^*$-algebras. 

This characterization concerns the wreath-product family with a fixed exact lamp group. It does not settle whether local reflexivity implies exactness for arbitrary reduced group $C^*$-algebras, nor does it determine local reflexivity of $\Cr(H)$ for general non-exact $H$.

The theorem also separates local reflexivity of reduced group $C^*$-algebras from geometric approximation properties of groups. Combining it with Osajda's non-exact groups having the Haagerup property \cite{OsajdaSC} and the permanence of the Haagerup property under wreath products \cite{CSV}, we obtain finitely generated Haagerup groups $G$ for which $\Cr(G)$ is not locally reflexive (Corollary~\ref{cor:haag}). Thus neither the Haagerup property nor coarse embeddability into Hilbert space implies local reflexivity of the reduced group $C^*$-algebra. Applying the theorem to Osajda's residually finite non-exact groups \cite{OsajdaRF} also gives finitely generated residually finite examples (Corollary~\ref{cor:rf}).

The proof of main theorem (Theorem~\ref{thm:main}) combines operator-algebraic permanence with symbolic coding. Section~\ref{sec:prelim} records the required permanence and crossed-product facts. The starting point of Section~\ref{sec:roe} is Sako's characterization of property A by local reflexivity of the uniform Roe algebra \cite{Sako}. Together with the characterization of exact groups by property A \cite{Ozawa}, it gives, for a proper right-invariant metric on $H$,
\begin{equation}\label{eq:intro-roe}
 \Cu(H)=\ell^\infty(H)\rtimes_r H
 \quad\text{is locally reflexive if and only if $H$ is exact.}
\end{equation}
Although $\Cr(H)$ embeds in $\Cu(H)$, failure of local reflexivity of the larger algebra does not imply failure for the smaller one. We transfer the obstruction by extracting a separable $H$-invariant commutative subalgebra $D\subset\ell^\infty(H)$ such that $D\rtimes_r H$ is not locally reflexive, and realizing this crossed product as a quotient of $\Cr(\Z\wrp H)$. Local reflexivity passes to $C^*$-quotients, so this proves the infinite cyclic lamp case.
The Bernoulli crossed-product description
\begin{equation}\label{eq:intro-bernoulli}
 \Cr(K\wrp H)\cong
 \left(\bigotimes_{h\in H,\min}\Cr(K)\right)\rtimes_r H
\end{equation}
then allows us to pass to general infinite lamps. For infinite $K$, Haar unitaries in $\Cr(K)$, supplied by Thiel's theorem \cite{Thiel}, give an embedding of $\Cr(\Z\wrp H)$ into $\Cr(K\wrp H)$.  

Finite lamps require a different mechanism, since their group $C^*$-algebras contain no copy of $C(\T)$. Adapting Salo's sparse-marker and Hall-matching construction \cite{Salo}, we obtain, for every countable nonamenable $H$, a continuous equivariant surjection from the binary full shift onto the full shift with Cantor alphabet. Pullback along this factor map and passage to reduced crossed products embed an infinite-lamp reduced group $C^*$-algebra into a binary-lamp reduced group $C^*$-algebra. A unital copy of $\C^2$ in every nontrivial finite lamp algebra then yields the finite-lamp case. Section~\ref{sec:coding} constructs the factor map, Section~\ref{sec:finite} completes the proof of the main theorem, and Section~\ref{sec:applications} gives the applications described above.

\vskip 0.3cm
\noindent
{\bf AI Use Statement. } We acknowledge the use of OpenAI’s ChatGPT to assist in exploring mathematical ideas underlying certain results of this paper, particularly in Section ~\ref{sec:coding}. All mathematical arguments have been independently verified and, where necessary, revised by the authors, who take full responsibility for the content and correctness of the final manuscript.

\section{Local reflexivity and crossed products}\label{sec:prelim}

For the reader’s convenience, we recall some of basic terminologies and results about local
reflexivity, crossed products and wreath products in this section. The details can be found in \cite{BO,ER22,Pi03}.

All groups in the paper are countable and discrete, all tensor products of $C^*$-algebras are minimal unless specified otherwise, and all infinite tensor products are taken with respect to the units. We use $\N_0=\{0,1,2,\ldots\}$ for indexing a countable family when zero is convenient.

\begin{definition}
A unital $C^*$-algebra $A$ is \emph{locally reflexive} if for every finite-dimensional operator system $E\subset A^{**}$ the inclusion $E\to A^{**}$ is the point-weak$^*$ limit of unital completely positive maps $E\to A$. The weak$^*$ topology is $\sigma(A^{**},A^*)$. 
\end{definition}

Equivalently, by Effros and Haagerup [4, Theorem 5.1], \(A\) is locally reflexive if and only if, for every finite-dimensional operator space \(F\), every complete contraction \(T:F\to A^{**}\) is the point-weak* limit of a net of complete contractions \(T_\alpha:F\to A\). Thus the operator-system formulation above agrees with the classical notion of local reflexivity for operator spaces.
We recall the following standard facts \cite{BO,EH,EOR,Kirchberg}:
\begin{enumerate}
\item every exact $C^*$-algebra is locally reflexive;
\item local reflexivity passes to $C^*$-subalgebras and $C^*$-quotients;
\item a $C^*$-algebra is locally reflexive if and only if all its separable $C^*$-subalgebras are locally reflexive.
\end{enumerate}
We spell out quotient permanence because the direction of this implication is important here.

\begin{lemma}\label{lem:quotient-lr}
Let $q:A\twoheadrightarrow B$ be a unital surjective $*$-homomorphism. If $A$ is locally reflexive, then $B$ is locally reflexive.
\end{lemma}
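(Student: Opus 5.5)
The plan is to work directly with the operator-system definition and to pass through the bidual of the quotient map. Let $J=\ker q$. Then $q^{**}:A^{**}\to B^{**}$ is a normal surjective $*$-homomorphism. Its kernel is the weak$^*$ closure $J^{**}=J^{\perp\perp}$, a weak$^*$-closed ideal of the form $A^{**}z$ for a central projection $z$. Hence $B^{**}\cong A^{**}(1-z)$, and the map $\iota:x\mapsto x(1-z)$ from $B^{**}$ into $A^{**}$ is a normal unital-onto-the-corner injective $*$-homomorphism with $q^{**}\circ\iota=\id_{B^{**}}$. I will identify $B^{**}$ with the corner $A^{**}(1-z)$ via this standard splitting.

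Fix a finite-dimensional operator system $E\subset B^{**}$. Let $\tilde E=\iota(E)+\C z\subset A^{**}$. Since $\iota(1)=1-z$, this is a finite-dimensional operator system containing $1_{A^{**}}=(1-z)+z$. Apply local reflexivity of $A$ to $\tilde E$. This gives a net of unital completely positive maps $\varphi_\alpha:\tilde E\to A$ with $\varphi_\alpha(y)\to y$ weak$^*$ for each $y\in\tilde E$.

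Define $\psi_\alpha:E\to B$ by $\psi_\alpha(x)=q\bigl(\varphi_\alpha(\iota(x)+\mu(x)z)\bigr)$, where $\mu$ is chosen so that $x\mapsto\iota(x)+\mu(x)z$ is unital and completely positive from $E$ into $\tilde E$. The cleanest choice is to take any state $\omega$ on $E$ (extend one by Hahn--Banach/Krein) and set $\mu=\omega$. Then $x\mapsto \iota(x)\oplus\omega(x)z$ is a direct sum of a ucp map into the corner and a ucp map into $\C z$, hence ucp into $\tilde E$. Composing with the ucp maps $\varphi_\alpha$ and the $*$-homomorphism $q$, each $\psi_\alpha$ is ucp.

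For convergence, $q:A\to B$ agrees with $q^{**}$ restricted to $A$, and $q^{**}$ is weak$^*$-continuous. Therefore
\[
\psi_\alpha(x)=q^{**}\bigl(\varphi_\alpha(\iota(x)+\omega(x)z)\bigr)\to q^{**}\bigl(\iota(x)+\omega(x)z\bigr)=x,
\]
since $q^{**}(z)=0$ and $q^{**}\iota=\id$. Thus the inclusion $E\to B^{**}$ is a point-weak$^*$ limit of ucp maps $E\to B$, so $B$ is locally reflexive.

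The only delicate point is the bidual identification. It requires that $\ker q^{**}$ is exactly the weak$^*$ closure of $J$, which is a central summand. It also requires that $q^{**}$ is normal and maps $A$ onto $B$ compatibly with the canonical embeddings. I would cite the standard fact $B^{**}\cong A^{**}/J^{**}$ from \cite{BO} rather than reprove it. Everything else is bookkeeping to keep the maps unital.
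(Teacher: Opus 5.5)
Your proposal is correct and follows the paper's argument. Both proofs identify $B^{**}$ with a central summand of $A^{**}$, use a state to make the splitting of $q^{**}$ unital and completely positive, apply local reflexivity of $A$, push down by $q$, and get convergence from normality of $q^{**}$. The only cosmetic difference is that you apply local reflexivity to $\iota(E)+\C z$ and precompose with $x\mapsto\iota(x)+\omega(x)z$. The paper instead applies it directly to $j(E)$, where $j(b)=\theta(b)+\omega(b)(1-z)$, with the roles of $z$ and $1-z$ swapped relative to yours.
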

\begin{proof}
The normal map $q^{**}:A^{**}\to B^{**}$ identifies $B^{**}$ with a central summand $zA^{**}$. Let $\theta:B^{**}\to zA^{**}$ be the inverse identification and choose a state $\omega$ on $B^{**}$. Then
\[
 j(b)=\theta(b)+\omega(b)(1-z)
\]
is a unital complete order embedding with $q^{**}j=\id_{B^{**}}$. If $E\subset B^{**}$ is a finite-dimensional operator system, local reflexivity of $A$ supplies unital completely positive maps $\phi_i:j(E)\to A$ converging point-weak$^*$ to the inclusion. The maps $q\phi_i j|_E:E\to B$ converge to the inclusion of $E$ in $B^{**}$ under the weak$^*$ topology $\sigma(B^{**},B^*)$, as seen by testing against $B^*$ and its pullback under $q$.
\end{proof}

\begin{lemma}\label{lem:crossed}
Suppose that $H$ acts on unital $C^*$-algebras $A$ and $D$ and that $\varphi:A\to D$ is an equivariant unital $*$-homomorphism. There is an induced $*$-homomorphism
\[
 \varphi\rtimes_r\mathrm{id}_H:A\rtimes_r H\longrightarrow D\rtimes_r H.
\]
It is injective when $\varphi$ is injective and surjective when $\varphi$ is surjective. 
\end{lemma}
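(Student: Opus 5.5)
The plan is to build the map on the dense algebraic crossed product, prove boundedness in the reduced norm using a concrete regular representation, and then read off injectivity and surjectivity from separate arguments. Write $C_c(H,A)$ for the twisted convolution algebra of finitely supported functions $H\to A$. The formula
\[
 \Big(\sum_h a_h u_h\Big)\mapsto \sum_h \varphi(a_h)u_h
\]
defines a $*$-homomorphism $C_c(H,A)\to C_c(H,D)$, because $\varphi$ is equivariant and multiplicative, so $\varphi(a\,\alpha_h(b))=\varphi(a)\,\beta_h(\varphi(b))$.

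For boundedness, fix a faithful representation $\pi:D\to B(\mathcal H)$. Then $\pi\circ\varphi$ is a (possibly non-faithful) representation of $A$. The regular representation $\widetilde{\pi\circ\varphi}$ of $A\rtimes_r H$ on $\ell^2(H,\mathcal H)$ satisfies
\[
 \widetilde{\pi\circ\varphi}(x)=\tilde\pi\big((\varphi\rtimes\id)(x)\big) \quad\text{for } x\in C_c(H,A).
\]
Since $\tilde\pi$ is faithful on $D\rtimes_r H$, this gives
\[
 \|(\varphi\rtimes\id)(x)\|_{D\rtimes_r H}=\|\widetilde{\pi\circ\varphi}(x)\|\le \|x\|_{A\rtimes_r H}.
\]
The inequality holds because the regular representation induced from any representation of $A$ is bounded by the reduced norm. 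One sees this by factoring through a faithful representation $\sigma$ of $A$: $\widetilde{\pi\circ\varphi}$ is a quotient of the regular representation induced from $\sigma\oplus(\pi\circ\varphi)$, which is faithful on $A$, and the reduced norm does not depend on the choice of faithful representation. The map therefore extends by continuity to $A\rtimes_r H$.

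For injectivity, if $\varphi$ is injective we can take $\sigma=\pi\circ\varphi$ itself as the faithful representation of $A$. The displayed identity then becomes an equality of norms, so the extension is isometric and hence injective. A cleaner alternative is to use the faithful conditional expectations $E_A$, $E_D$ onto the coefficient algebras. These satisfy
\[
 E_D\circ(\varphi\rtimes\id)=\varphi\circ E_A .
\]
If $(\varphi\rtimes\id)(x)=0$, then $\varphi(E_A(x^*x))=0$, so $E_A(x^*x)=0$ and therefore $x=0$.

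For surjectivity, the image of a $*$-homomorphism between $C^*$-algebras is closed. If $\varphi$ is surjective, that image contains $C_c(H,D)$, which is dense in $D\rtimes_r H$, so the map is onto. No exactness is needed for this step, since we do not claim the kernel equals $\ker\varphi\rtimes_r H$.

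The main obstacle is the boundedness step for non-injective $\varphi$: one has to justify that the regular representation induced from a non-faithful representation of $A$ is still dominated by the reduced norm. The argument above handles this by passing to the direct sum $\sigma\oplus(\pi\circ\varphi)$.
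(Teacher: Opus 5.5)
Your proposal is correct and follows essentially the paper's route: you extend the coefficientwise map through the regular representation, and you get surjectivity from dense image plus closed range. Your ``cleaner alternative'' for injectivity via $E_D\circ(\varphi\rtimes\mathrm{id})=\varphi\circ E_A$ is exactly the paper's argument. The only difference is that you spell out the boundedness step, which the paper simply cites from Brown--Ozawa: the regular representation induced from $\pi\circ\varphi$ is a direct summand of the one induced from the faithful $\sigma\oplus(\pi\circ\varphi)$. It is a subrepresentation rather than a ``quotient,'' though the norm bound is the same.
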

\begin{proof}
The coefficientwise map on finite crossed-product sums extends to the reduced completions by the regular-representation construction; see \cite{BO}. If $\varphi$ is surjective, its induced map has dense range. The range of a $*$-homomorphism is closed, so the induced map is surjective. If $\varphi$ is injective, use the faithful canonical conditional expectations $E_A$ and $E_D$. They satisfy
\[
 E_D\bigl((\varphi\rtimes_r\mathrm{id}_H)(x)\bigr)=\varphi(E_A(x)).
\]
Applying this identity to $x^*x$ proves injectivity.
\end{proof}

For a group $N$ with an action of $H$, the canonical identifications are
\begin{equation}\label{eq:semidirect}
 \Cr(N\rtimes H)\cong\Cr(N)\rtimes_r H,
 \qquad
 \Cr(K\wrp H)\cong
 \left(\bigotimes_{h\in H}\Cr(K)\right)\rtimes_r H.
\end{equation}
These formulas do not require amenability of $N$ or $K$. We will also use that exactness of discrete groups is preserved under subgroups, finite direct products, directed unions, and extensions \cite{KW}. A group is exact precisely when its reduced group algebra is exact.

\section{From the uniform Roe algebra to infinite lamps}\label{sec:roe}

Equip $H$ with a proper right-invariant metric arising from a proper length function. With the left translation action $\alpha_h(f)(x)=f(h^{-1}x)$, the standard realization is
\[
 \Cu(H)=\ell^\infty(H)\rtimes_r H.
\]
The right-invariant metric makes the left translation operators finite-propagation operators. Property A and exactness of $H$ are equivalent \cite{Ozawa}; Sako's theorem \cite[Theorem~1.1]{Sako} gives
\begin{equation}\label{eq:sako}
 H\text{ non-exact}\quad\Longrightarrow\quad
 \Cu(H)\text{ not locally reflexive}.
\end{equation}

\begin{lemma}\label{lem:separable-roe}
If $H$ is non-exact, there is a separable unital $H$-invariant commutative $C^*$-subalgebra $D\subset\ell^\infty(H)$, generated by countably many projections, such that $D\rtimes_r H$ is not locally reflexive.
\end{lemma}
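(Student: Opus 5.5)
We combine Sako's theorem \eqref{eq:sako} with separable determination of local reflexivity (fact (iii)) and injectivity of reduced crossed products (Lemma~\ref{lem:crossed}). By \eqref{eq:sako}, $\Cu(H)=\ell^\infty(H)\rtimes_r H$ is not locally reflexive. By fact (iii), there is a separable $C^*$-subalgebra $B\subset \Cu(H)$ that is not locally reflexive. Replacing $B$ by the $C^*$-algebra generated by $B$ and the unit keeps it separable and non-locally reflexive, since local reflexivity passes to subalgebras (fact (ii)). Choose a countable dense subset $\{b_n\}$ of $B$.

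Each $b_n$ is a norm limit of finite sums $\sum_h f_h\lambda_h$ with $f_h\in\ell^\infty(H)$. Hence there is a countable set $S_0\subset\ell^\infty(H)$ such that every $b_n$ lies in the closed span of $\{f\lambda_h: f\in S_0,\ h\in H\}$. One can recover the coefficients via the conditional expectation, $f_h=E(b\lambda_h^*)$, but approximating sums suffice.

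Next we replace each $f\in S_0$ by projections. Every element of $\ell^\infty(H)$ is a norm limit of finite linear combinations of characteristic functions $1_A$ with $A\subset H$: take level sets of real and imaginary parts over a dyadic grid of values. This gives a countable family $\mathcal P_0$ of projections $1_A\in\ell^\infty(H)$ whose closed span contains $S_0$. Let $\mathcal P$ be the set of all translates $\alpha_h(p)=1_{hA}$ for $p=1_A\in\mathcal P_0$ and $h\in H$, together with $1$. Since $H$ is countable, $\mathcal P$ is countable. Let $D$ be the unital $C^*$-algebra generated by $\mathcal P$. It is commutative, separable, generated by countably many projections, and $H$-invariant, because $\mathcal P$ is invariant and $\alpha_h$ is a $*$-automorphism.

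By Lemma~\ref{lem:crossed}, the inclusion $D\hookrightarrow\ell^\infty(H)$ is equivariant and unital, so it induces an injective $*$-homomorphism $D\rtimes_r H\hookrightarrow \ell^\infty(H)\rtimes_r H$. Its image is the closed span of $D\lambda_H$, which contains $S_0\lambda_H$ and hence every $b_n$, and therefore all of $B$. Thus $B$ is isomorphic to a $C^*$-subalgebra of $D\rtimes_r H$. If $D\rtimes_r H$ were locally reflexive, then $B$ would be as well by fact (ii), a contradiction.

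The only delicate point is the first step: invoking separable determination (iii) correctly to obtain a separable non-locally reflexive subalgebra of $\Cu(H)$. The rest is routine bookkeeping. One should also check that Lemma~\ref{lem:crossed} identifies the image with the closure of $D$-coefficient sums inside $\Cu(H)$. This is immediate, since the induced map is the coefficientwise inclusion on finite sums.
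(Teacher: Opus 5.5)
Your proposal is correct and follows essentially the same route as the paper: Sako's theorem plus separable determination gives a separable unital non-locally reflexive $B\subset\Cu(H)$; the countably many coefficients of finite-sum approximants are then replaced by countably many translation-invariant characteristic projections; and Lemma~\ref{lem:crossed} places $B$ inside $D\rtimes_r H\subset\Cu(H)$. The only difference is cosmetic: you phrase the coefficient step via closed spans of projections rather than via the unital algebra they generate, which amounts to the same thing here.
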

\begin{proof}
By \eqref{eq:sako} and separable determination, choose a separable non-locally reflexive $C^*$-subalgebra $B\subset\Cu(H)$. Adjoining the ambient unit preserves this failure, so assume that $B$ is unital. Choose a countable dense set $\{b_j:j\ge1\}$ in $B$. For every $j,m\ge1$, choose a finite sum
\[
 x_{j,m}=\sum_{h\in F_{j,m}} f_{j,m,h}\lambda_h,
 \qquad \|b_j-x_{j,m}\|<1/m.
\]
Only countably many coefficient functions occur. Each bounded function on $H$ is uniformly approximable by finite-valued functions, and hence belongs to the unital algebra generated by a countable collection of characteristic projections. Collect these projections for all coefficients, together with all their $H$-translates, in a countable set $\mathcal P$. Put $D=C^*(1,\mathcal P)$. Then $D$ is separable and $H$-invariant, and every $x_{j,m}$ belongs to $D\rtimes_r H$. Lemma~\ref{lem:crossed} gives isometric inclusions
\[
 B\subset D\rtimes_r H\subset\Cu(H).
\]
Local reflexivity of the middle algebra would imply that of $B$, a contradiction.
\end{proof}

\begin{lemma}\label{lem:one-unitary}
A unital commutative $C^*$-algebra generated by countably many projections is generated by one unitary.
\end{lemma}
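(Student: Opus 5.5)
Let $D$ be a unital commutative $C^*$-algebra generated by countably many projections $p_1,p_2,\dots$. I would use Gelfand duality. Write $D\cong C(X)$, where $X$ is compact Hausdorff. The clopen sets $U_n$ corresponding to the $p_n$ separate points of $X$, because the $p_n$ generate $D$. Hence the map
\[
 \iota:X\to\{0,1\}^{\N},\qquad \iota(x)=(1_{U_n}(x))_{n\ge1},
\]
is continuous and injective, and therefore a homeomorphism onto its image. So $X$ is a closed subset of the Cantor set: it is totally disconnected and metrizable. If finitely many projections suffice, then $X$ is finite and the same argument applies.

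The plan is to find a continuous injective function $u:X\to\T$. By Stone--Weierstrass, such a $u$ generates $C(X)$: the $*$-algebra generated by $u$ (using $\bar u=u^{-1}$) separates points and contains constants, hence is dense. So it suffices to embed $X$ into $\T$.

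For the embedding, I would embed the Cantor set $\{0,1\}^{\N}$ itself into $\T$ and then restrict to $X$. Define
\[
 u(x)=\exp\Bigl(2\pi i\sum_{n\ge1}2x_n3^{-n}\Bigr).
\]
The exponent lies in the middle-thirds Cantor set inside $[0,1]$, and the coding $x\mapsto\sum 2x_n3^{-n}$ is injective and continuous. The only identification made by $t\mapsto e^{2\pi it}$ is $0\sim1$; this is avoided by rescaling the exponent into $[0,1/2]$, for instance by using $\sum x_n3^{-n}$. Continuity is clear from uniform convergence of the series, so the composite $u\circ\iota$ is a continuous injection $X\to\T$. The corresponding unitary in $D$ is then $\sum$-approximable explicitly: it is the norm limit of $\exp\bigl(2\pi i\sum_{n\le N}3^{-n}p_n\bigr)$, which lies in $C^*(p_1,\dots,p_N)$.

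No step is a genuine obstacle. The one point to check carefully is that injectivity of $\iota$ uses the fact that the projections generate $D$, so that characters agreeing on all $p_n$ coincide. An alternative to the Gelfand-duality argument is purely algebraic: show directly that each $p_n$ lies in $C^*(u)$ by applying continuous functional calculus to $u$ with a function that is $1$ on the image of $\{x_n=1\}$ and $0$ on the image of $\{x_n=0\}$. These two images are disjoint compact subsets of $\T$, so such a continuous function exists.
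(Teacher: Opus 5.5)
Your proposal is correct and is essentially the paper's argument: the paper also forms $f=\sum_{n\ge1}3^{-n}p_n\in[0,1/2]$, recovers each $p_n$ as a continuous function of $f$ via injectivity of the ternary coding $\{0,1\}^{\N}\to[0,1/2]$, and then passes to $u=e^{if}$ because the exponential is injective on $[0,1/2]$. Your Gelfand-duality and Stone--Weierstrass phrasing, with the corrected exponent $\sum x_n3^{-n}$ (the first choice $\sum 2x_n3^{-n}$ would identify $0$ and $1$), is a more explicit spatial version of the same functional-calculus step.
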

\begin{proof}
List the projections as $p_1,p_2,\ldots$, with repetitions if necessary, and put
\[
 f=\sum_{n\ge1}3^{-n}p_n.
\]
The series converges in norm and $0\le f\le1/2$. The map
\[
 \{0,1\}^{\N}\longrightarrow[0,1/2],
 \qquad (\varepsilon_n)\longmapsto\sum_{n\ge1}3^{-n}\varepsilon_n
\]
is continuous and injective: at the first differing coordinate, its contribution exceeds the sum of all later contributions. Its inverse on its compact image is continuous. Each $p_n$ is consequently a continuous function of $f$, so $C^*(1,f)$ is the whole algebra. Since the exponential is injective on $[0,1/2]$, the unitary $u=e^{if}$ generates the same unital algebra.
\end{proof}

\begin{proposition}\label{prop:cyclic-quotient}
If $D$ is a unital commutative $C^*$-algebra generated by one unitary and $H$ acts on $D$, there is a surjective $*$-homomorphism
\[
 \Cr(\Z\wrp H)\twoheadrightarrow D\rtimes_r H.
\]
\end{proposition}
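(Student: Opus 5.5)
Write $u\in D$ for the generating unitary. Since $\Z$ is abelian, $\Cr(\Z)\cong C(\T)$ is the universal $C^*$-algebra of one unitary, and by \eqref{eq:semidirect}
\[
 \Cr(\Z\wrp H)\cong\Bigl(\bigotimes_{h\in H}C(\T)\Bigr)\rtimes_r H\cong C(\T^H)\rtimes_r H,
\]
with $H$ acting by the Bernoulli shift. The plan is to build an $H$-equivariant surjection
\[
 \pi:\bigotimes_{h\in H}C(\T)\twoheadrightarrow D,
\]
and then invoke Lemma~\ref{lem:crossed} to get a surjection $\pi\rtimes_r\mathrm{id}_H$ onto $D\rtimes_r H$. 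Write $\alpha$ for the action of $H$ on $D$.

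First define $\pi$. Let $z_h$ denote the coordinate unitary of the tensor factor at $h$, so that $z_h=1\otimes\cdots\otimes z\otimes\cdots\otimes1$ in position $h$. Send $z_h\mapsto\alpha_h(u)$. The elements $\alpha_h(u)$ are commuting unitaries in the commutative algebra $D$. The maximal tensor product of commutative algebras is the universal $C^*$-algebra of commuting unitaries indexed by $H$, and for commutative algebras min and max agree, so this assignment extends to a unital $*$-homomorphism $\pi$. Concretely, on spectra, $\pi$ is the pullback along the continuous map $\widehat D\to\T^H$, $\chi\mapsto(\chi(\alpha_h(u)))_{h\in H}$.

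Next, equivariance. The shift sends $z_x$ to $z_{hx}$, because $(h\cdot a)(x)=a(h^{-1}x)$ moves the coordinate at $x$ to $hx$. Then
\[
 \pi(z_{hx})=\alpha_{hx}(u)=\alpha_h(\alpha_x(u))=\alpha_h(\pi(z_x)).
\]
Since both $\pi\circ\beta_h$ and $\alpha_h\circ\pi$ are $*$-homomorphisms agreeing on generators, equivariance holds. Surjectivity is immediate: the range of $\pi$ is closed and contains $u=\pi(z_e)$, and $u$ generates $D$.

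The main point to get right is that the convention for the shift action in \eqref{eq:semidirect} matches the one used in $\pi$. If the conventions instead gave $z_x\mapsto z_{xh^{-1}}$, I would replace $\alpha_h(u)$ by $\alpha_{h^{-1}}(u)$ (equivalently, index by inverses) to restore equivariance. With that settled, Lemma~\ref{lem:crossed} finishes the proof.
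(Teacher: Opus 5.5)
Your proposal is correct and is essentially the paper's proof: the paper likewise defines $q\colon C^*(\Z^{(H)})\to D$ by sending the coordinate unitary $v_h$ to $\alpha_h(u)$, gets equivariance and surjectivity from $q(v_e)=u$, uses that the full and reduced algebras of the abelian group $\Z^{(H)}$ coincide, and applies Lemma~\ref{lem:crossed} with \eqref{eq:semidirect}. Your $C(\T^H)$ picture gives the same map, and your explicit check that the shift sends $z_x$ to $z_{hx}$ supplies the equivariance verification that the paper leaves implicit.
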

\begin{proof}
Let $u$ generate $D$, and let $v_h$ denote the standard unitary at the $h$-coordinate of $\Z^{(H)}$. The commuting family $\{\alpha_h(u):h\in H\}$ defines a unital $*$-homomorphism
\[
 q:C^*(\Z^{(H)})\longrightarrow D,
 \qquad q(v_h)=\alpha_h(u).
\]
It is equivariant and surjective because $q(v_e)=u$. Since $\Z^{(H)}$ is abelian, its full and reduced group $C^*$-algebras agree. Apply Lemma~\ref{lem:crossed} and \eqref{eq:semidirect}.
\end{proof}

\begin{theorem}\label{thm:cyclic}
If $H$ is non-exact, then $\Cr(\Z\wrp H)$ is not locally reflexive.
\end{theorem}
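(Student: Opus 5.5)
The plan is to chain the preceding results and then use that local reflexivity passes to quotients. First, I apply Lemma~\ref{lem:separable-roe} to the non-exact group $H$. This gives a separable, unital, $H$-invariant commutative $C^*$-subalgebra $D\subset\ell^\infty(H)$, generated by countably many projections, such that $D\rtimes_r H$ is not locally reflexive. The $H$-action on $D$ is the restriction of left translation on $\ell^\infty(H)$, and $H$-invariance of $D$ is what makes this restriction an action by automorphisms of $D$.

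Next, Lemma~\ref{lem:one-unitary} shows that $D$ is generated, as a unital $C^*$-algebra, by a single unitary $u$. Proposition~\ref{prop:cyclic-quotient} then applies to this $D$ with its $H$-action and gives a surjective $*$-homomorphism
\[
 q:\Cr(\Z\wrp H)\twoheadrightarrow D\rtimes_r H .
\]
This map is unital. It sends the unit to $1_D\delta_e$, since it comes from a unital equivariant map $C^*(\Z^{(H)})\to D$ through Lemma~\ref{lem:crossed} and the identification \eqref{eq:semidirect}.

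Finally, I argue by contradiction. Suppose $\Cr(\Z\wrp H)$ were locally reflexive. Then Lemma~\ref{lem:quotient-lr}, applied to the unital surjection $q$, would make $D\rtimes_r H$ locally reflexive, contradicting the choice of $D$. Hence $\Cr(\Z\wrp H)$ is not locally reflexive.

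No step here looks like a real obstacle, because all the substantive work sits in the cited lemmas, chiefly Sako's theorem as used in Lemma~\ref{lem:separable-roe}. The one thing to check carefully is that the hypotheses line up. In particular, the crossed product $D\rtimes_r H$ produced by Proposition~\ref{prop:cyclic-quotient} must be the same algebra that Lemma~\ref{lem:separable-roe} showed to be non-locally reflexive. It is: both are reduced crossed products for the same restricted translation action, and Lemma~\ref{lem:crossed} identifies $D\rtimes_r H$ with its image in $\Cu(H)$.
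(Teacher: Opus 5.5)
Your proposal is correct and is essentially the paper's own proof. Both choose $D$ by Lemma~\ref{lem:separable-roe}, make it singly generated by a unitary via Lemma~\ref{lem:one-unitary}, realize $D\rtimes_r H$ as a unital quotient of $\Cr(\Z\wrp H)$ via Proposition~\ref{prop:cyclic-quotient}, and conclude by quotient permanence (Lemma~\ref{lem:quotient-lr}). Your explicit checks, that the quotient map is unital and that the crossed product is the same one shown to be non-locally reflexive, are details the paper leaves implicit.
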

\begin{proof}
Choose $D$ as in Lemma~\ref{lem:separable-roe}. Lemma~\ref{lem:one-unitary} and Proposition~\ref{prop:cyclic-quotient} realize $D\rtimes_r H$ as a quotient of $\Cr(\Z\wrp H)$. Thus Lemma~\ref{lem:separable-roe} and Lemma~\ref{lem:quotient-lr} imply that  $\Cr(\Z\wrp H)$ is not locally reflexive.
\end{proof}

\begin{proposition}\label{prop:infinite-embedding}
For every infinite group $K$ and every group $H$, there is a unital embedding
\[
 \Cr(\Z\wrp H)\hookrightarrow\Cr(K\wrp H).
\]
\end{proposition}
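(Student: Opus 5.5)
The plan is to build the embedding in three stages: find a Haar unitary in $\Cr(K)$, tensor its copies over $H$, and then pass to reduced crossed products. By \eqref{eq:semidirect}, both sides are reduced crossed products by $H$ of the Bernoulli shift on an infinite tensor product:
\[
 \Cr(\Z\wrp H)\cong\Bigl(\bigotimes_{h\in H}C(\T)\Bigr)\rtimes_r H,
 \qquad
 \Cr(K\wrp H)\cong\Bigl(\bigotimes_{h\in H}\Cr(K)\Bigr)\rtimes_r H,
\]
where we use $\Cr(\Z)\cong C(\T)$. By Lemma~\ref{lem:crossed}, it is enough to produce a unital injective $H$-equivariant $*$-homomorphism between the two tensor products.

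\textbf{Step 1.} Since $K$ is infinite, Thiel's theorem \cite{Thiel} supplies a Haar unitary $w\in\Cr(K)$ with respect to the canonical trace $\tau_K$. This means $\tau_K(w^n)=0$ for all $n\neq0$. Such a $w$ has spectral measure under $\tau_K$ equal to Lebesgue measure, which has full support. Because $\tau_K$ is faithful, it follows that $\sigma(w)=\T$, so $z\mapsto w$ gives a unital injective $*$-homomorphism $\iota:C(\T)\to\Cr(K)$. In addition, $\tau_K\circ\iota$ is the Haar state on $C(\T)$, which is the canonical trace of $\Cr(\Z)$.

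\textbf{Step 2.} Form $\bigotimes_h\iota$ on the infinite minimal tensor product. Injectivity for finitely many tensor factors holds because minimal tensor products of injective $*$-homomorphisms are injective. The maps are compatible with the unital connecting maps, so they pass to the inductive limit. A $*$-homomorphism of inductive limits that is injective on each stage is isometric on the dense union, hence injective. Equivariance holds by construction, since the same $\iota$ is used in every coordinate and the shift permutes the coordinates.

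\textbf{Step 3.} Apply Lemma~\ref{lem:crossed} to get the embedding, and make sure it is compatible with the identifications in \eqref{eq:semidirect}. Concretely, the resulting map sends $v_h\mapsto$ the copy of $w$ in coordinate $h$, and it sends $\lambda_h\mapsto\lambda_h$.

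The main subtlety is only to make sure that Thiel's theorem gives a genuine Haar unitary in the reduced algebra for every infinite $K$, and not merely for non-torsion $K$. For $K$ with an element of infinite order this is immediate: take $w=\lambda_k$. For infinite torsion groups we rely on \cite{Thiel}. As a check, the trace-preservation observation gives a second route to injectivity: $\bigotimes_h\iota$ intertwines the product traces, and these traces are faithful on both infinite tensor products (they are the canonical traces of $\Cr(\Z^{(H)})$ and $\Cr(K^{(H)})$). A trace-preserving $*$-homomorphism from a faithful trace is injective, so this independently confirms injectivity if the inductive-limit argument needs backup.
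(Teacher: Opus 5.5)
Your proposal is correct and follows essentially the same route as the paper: Thiel's theorem gives a Haar unitary, hence a unital embedding $C(\T)\cong\Cr(\Z)\hookrightarrow\Cr(K)$. This embedding is tensored over the coordinates of $H$ and passed to reduced crossed products via Lemma~\ref{lem:crossed} and \eqref{eq:semidirect}. Your extra details (injectivity through the inductive limit, and the trace-preservation check) are sound, though faithfulness of $\tau_K$ is not needed to get $\sigma(w)=\T$, since full support of the spectral measure already forces $\T\subseteq\sigma(w)$.
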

\begin{proof}
Since $K$ is infinite, it follows from \cite[Proposition~8.1]{Thiel} that the canonical trace on $\Cr(K)$ admits a Haar unitary $w$, meaning that $\tau_K(w^n)=0$ for every nonzero integer $n$. Since the spectral distribution of \(w\) with respect to \(\tau_K\) is the normalized Haar measure on \(\mathbb T\), we have \(\sigma(w)=\mathbb T\). Hence continuous functional calculus yields a unital embedding \(C(\mathbb T)\cong C_r^*(\mathbb Z)\hookrightarrow C_r^*(K)\).
Tensor this same embedding over all coordinates of $H$. The resulting embedding is equivariant for the Bernoulli actions. Apply Lemma~\ref{lem:crossed} and \eqref{eq:semidirect}.
\end{proof}

\begin{corollary}\label{cor:infinite-lamps}
If $H$ is non-exact and $K$ is infinite, then $\Cr(K\wrp H)$ is not locally reflexive.
\end{corollary}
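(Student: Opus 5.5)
The corollary follows by combining Theorem~\ref{thm:cyclic} with Proposition~\ref{prop:infinite-embedding}, using that local reflexivity passes to $C^*$-subalgebras.

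The argument is by contraposition. Suppose $\Cr(K\wrp H)$ were locally reflexive. Since $K$ is infinite, Proposition~\ref{prop:infinite-embedding} gives a unital embedding $\Cr(\Z\wrp H)\hookrightarrow\Cr(K\wrp H)$. Fact (ii) of Section~\ref{sec:prelim} says that local reflexivity passes to $C^*$-subalgebras. Applied to the image of this embedding, which is isomorphic to $\Cr(\Z\wrp H)$, it shows that $\Cr(\Z\wrp H)$ is locally reflexive. This contradicts Theorem~\ref{thm:cyclic}, since $H$ is non-exact.

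There is no genuine obstacle here; all the substance lies in the earlier results. The only point to check is that the map in Proposition~\ref{prop:infinite-embedding} is really an injective $*$-homomorphism, so that its image is a $C^*$-subalgebra isomorphic to $\Cr(\Z\wrp H)$. Injectivity is supplied by Lemma~\ref{lem:crossed}.

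This, in turn, requires that the infinite minimal tensor product of injective unital maps $C(\T)\to\Cr(K)$ be injective. That holds because minimal tensor products of injective maps are injective on finite tensor products. Injectivity then passes to the inductive limit because each finite-stage map is isometric, so the limit map is isometric as well.
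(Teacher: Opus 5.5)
Your proposal is correct and matches the paper's proof exactly: the paper cites Proposition~\ref{prop:infinite-embedding}, Theorem~\ref{thm:cyclic}, and permanence of local reflexivity under $C^*$-subalgebras. Your extra check is also accurate. Injective $*$-homomorphisms are isometric, minimal tensor products of them stay injective at each finite stage, and isometry passes to the inductive limit. This just spells out what the paper leaves implicit in the proof of Proposition~\ref{prop:infinite-embedding}.
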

\begin{proof}
Use Proposition~\ref{prop:infinite-embedding}, Theorem~\ref{thm:cyclic}, and permanence under $C^*$-subalgebras.
\end{proof}
To show that the above result is also true for every nontrivial finite group $K$,  we need the following section to establish the symbolic coding theorem.

\section{Nonlinear compression of the binary full shift}\label{sec:coding}

For a compact space $B$, the full shift $B^H$ carries the action
\[
 (g\cdot x)(h)=x(g^{-1}h).
\]
A \emph{factor map} means a continuous equivariant surjection. In this section the word metric is left-invariant, $d(g,h)=|g^{-1}h|$.  It is independent of the right-invariant metric used for the Roe algebra realization in Section~\ref{sec:roe}.

The purpose of this section is to overcome the finite-dimensional nature of a
finite lamp algebra. The argument for infinite lamps used a copy of $C(\T)$
inside a single lamp algebra; this is unavailable for a finite lamp.

\subsection*{Background for the coding argument}

The proof of Theorem~\ref{thm:finite-factor} mainly uses graph-theoretic combinatorics, especially bipartite matching via Hall’s theorem. These tools are assembled into a local encoding–decoding construction in symbolic dynamics, closely related to cellular automata on groups.
We recall the concepts and elementary results used in
Theorem~\ref{thm:finite-factor}, together with their roles in its proof.
The construction stores each required output letter in a recognizable binary
block at a nearby site. Separation keeps the blocks from interfering;
nonamenable expansion supplies enough possible storage sites; and Hall's
theorem assigns a different site to each required letter.
\vskip 0.2cm
\noindent
{\bf Finite patterns and local rules.}
For a finite alphabet $A$, give $A$ the discrete topology and $A^H$ the
product topology. A \emph{pattern} on a finite set $W\subset H$ is an element
of $A^W$. A basic open set in $A^H$, called a \emph{cylinder set}, prescribes
a pattern on finitely many coordinates. If $\kappa:A^W\to B$ is a fixed
function, then
\[
 \Phi(x)(g)=\kappa\bigl((x(gw))_{w\in W}\bigr)
\]
defines a continuous equivariant map $A^H\to B^H$: each output coordinate
uses finitely many input coordinates, and translating $x$ simply translates
the window $gW$. Indeed, for $k\in H$ the rule gives
$\Phi(k\cdot x)(g)=\Phi(x)(k^{-1}g)$.
Thus, once a finite decoding rule is defined on \emph{every} input pattern,
the remaining requirement for a factor map is surjectivity.

\vskip 0.2cm
\noindent
{\bf Nonamenable expansion.}
The right multiplication form of F\o lner's criterion \cite{Folner} says that
$H$ is amenable if and only if, for every finite $P\subset H$ and every
$\varepsilon>0$, some nonempty finite $F\subset H$ satisfies
$|FP|<(1+\varepsilon)|F|$. Hence a nonamenable group has a finite set $P$
and $\delta>0$ such that $|FP|\ge(1+\delta)|F|$ for all nonempty finite $F$.

\vskip 0.2cm
\noindent
{\bf The finite Hall marriage theorem.}
Let $X$ and $Y$ be the two vertex classes of a finite bipartite graph, and
write $N(E)\subset Y$ for the set of all neighbors of $E\subset X$.
Hall's theorem \cite{Hall} (see \cite[Section~1.4]{LM24}) states that an injective assignment
$m:X\to Y$ with $m(x)$ adjacent to $x$ for every $x\in X$ exists if and
only if
\[
 |N(E)|\ge |E|\qquad\text{for every }E\subset X.
\]

\begin{theorem}\label{thm:finite-factor}
If $H$ is nonamenable and $B$ is a nonempty finite alphabet, there is a factor map
\[
 \{0,1\}^H\twoheadrightarrow B^H.
\]
\end{theorem}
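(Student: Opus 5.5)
We will define a sliding block code $\Phi(x)(g)=\kappa\bigl((x(gw))_{w\in W}\bigr)$ for a finite window $W$ and a decoding rule $\kappa$ defined on \emph{every} pattern in $\{0,1\}^W$. By the discussion of local rules, such a $\Phi$ is automatically continuous and equivariant, so the whole task is surjectivity. Surjectivity in turn follows from a local statement plus compactness. It suffices that for every $y\in B^H$ and every finite $F\subset H$ there is some $x$ with $\Phi(x)|_F=y|_F$. The images $\Phi(\{0,1\}^H)$ are then dense, and they are compact, hence everything.

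We may assume $|B|\ge 2$, and we encode $B$ injectively into binary words of a fixed length $\ell$.

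\textbf{Markers.} Choose a finite "marker" shape $M\subset H$ and a binary pattern $\mu$ on $M$, together with a finite set of "data positions" $D_0$ of size $\ell$ disjoint from $M$. The marker must be designed so that an occurrence of $\mu$ at $s$ (meaning $x(sm)=\mu(m)$ for all $m\in M$) cannot overlap another occurrence at $s'$ with $s^{-1}s'$ in a prescribed finite set $S$. This is Salo's sparse-marker idea. For example, take $\mu$ to be a long block of 1's surrounded by a frame of 0's along a geodesic-like set, so that distinct occurrences at nearby sites would force contradictory values.

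\textbf{Decoding rule.} A site $g$ reads $x$ on $gW$. It looks for marker occurrences at sites $gp$ with $p$ in a finite set $P'$, and picks the one that is least in a fixed ordering of $P'$ and that also carries a "pointer" back to $g$. If it finds one, it outputs the letter decoded from the data positions of that block. Otherwise it outputs a default letter $b_0$.

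The pointer has to be stored inside the block. The block records the offset $p^{-1}$ from the storage site to the target, which takes finitely many bits since $P'$ is finite. The site $g$ then accepts a block at $gp$ only if that block points to $g$. This makes the assignment of storage site to target unambiguous, so each stored letter is read by exactly the site it was written for.

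\textbf{Realizing a finite pattern.} Given $y$ and $F$, we let the targets be the sites $f\in F$ with $y(f)\ne b_0$, or simply all of $F$. We must place disjoint, separated blocks at distinct storage sites $s(f)\in fP'$ and fill all other coordinates with 0's. The zeros must not accidentally create a marker; this is automatic if $\mu$ contains a 1, since the blocks' own contents are controlled by the separation property.

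The assignment comes from Hall's theorem. Consider the bipartite graph between $F$ and the candidate storage sites, where $f$ is joined to $s$ when $s\in fP'$. We need a large enough separated family of candidates. Fix $R$ large, take a maximal $R$-separated subset $\Lambda\subset H$ (it is $R$-dense), and use nonamenable expansion, iterated to get expansion factor $\ge 1$ relative to the separated net. Because $|EP^k|\ge(1+\delta)^k|E|$ while each $R$-ball contains a bounded number of points, for $k$ large we get $|EP^k\cap\Lambda|\ge|E|$ whenever $E\subseteq F$, using $\Lambda$ with $R$-balls at density at least $1/|B_R|$. This is where nonamenability is genuinely used; for amenable $H$ the count fails.

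Hall then gives injective $s:F\to\Lambda$ with $s(f)\in fP'$, where $P'=P^kB_R$. The $R$-separation of $\Lambda$ ensures the blocks $s(f)(M\cup D_0)$ are pairwise disjoint and never falsely recognized, provided $R$ exceeds the block diameter together with $S$. Finally we take $W\supseteq P'^{-1}$-translates of $M\cup D_0$.

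\textbf{Main obstacle.} The delicate step is the joint choice of the marker pattern and the recognition rule. Any configuration, including adversarial ones, must still decode to something, and in the constructed configurations no spurious marker occurrences may arise from overlaps of genuine blocks with surrounding zeros or with each other. I would first try Salo's approach. Choose $\mu$ to be the indicator of a large ball $B_r$ minus its center ring pattern, with all data bits placed outside $B_r$. Then require in the rule that a marker at $s$ be isolated, meaning no other 1-ball of radius $r$ appears within distance $2R$. Since genuine blocks are $R$-separated and data regions are too small to contain a full $r$-ball of 1's, false recognition is excluded.
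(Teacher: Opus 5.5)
Your architecture matches the paper's: sparse markers carrying a letter and a stored offset, a maximal separated set of candidate centers, Hall's theorem fed by nonamenable expansion, and density plus compactness. But the step you call the main obstacle is the real content of the proof, and you leave it open. You need a marker for which, in every configuration you build, the recognized sites are \emph{exactly} the chosen centers, whatever the data bits are. It is true that all-zero regions cannot contain $\mu$ once $\mu$ has a $1$. However, separation only rules out interference between different blocks; it says nothing about an occurrence that overlaps one block's marker and its arbitrary data.

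Your ball-shaped candidates are unverified, and read literally they can fail. A ball of $1$'s with a frame of $0$'s is invariant under the left stabilizer of $B_r$, and that stabilizer can be nontrivial. In $\F_2\times\Z/2$ with generators $(a^{\pm1},\epsilon)$, $(b^{\pm1},\epsilon)$, $(e,1)$ one has $B_r=B_r^{\F_2}\times\Z/2$ for $r\ge1$. So every occurrence at $s$ is also an occurrence at $s(e,1)$, a spurious block whose letter and pointer are uncontrolled.

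The isolation clause does not fix this. Genuine centers are only $R$-separated, so requiring no other marker within distance $2R$ rejects genuine blocks.

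There is also an unaddressed circularity in the parameters. The pointer needs $\log_2|P'|$ bits, and $P'=P^kB_R$ depends on $R$. But $R$ must exceed the block diameter, which depends on the number of pointer bits. This closes only because balls grow exponentially, and that estimate has to be carried out.

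The paper settles both points with an explicit design:
\begin{enumerate}
\item Choose $a$ with $a^2\ne e$; such $a$ exists because groups of exponent $2$ are abelian, hence amenable.
\item Use the two-point marker $x(t)=x(ta)=1$. Put the data on a greedy independent set $Q\subset U_r\setminus\{a^{-1},e,a,a^2\}$ for the relation $q\sim qa$, and zeros elsewhere. Then every block pattern satisfies $p(t)=p(ta)=1\iff t=e$.
\item Separate centers by $D=2r+2$, so no $a$-pair can straddle two blocks. Hence codewords match exactly at the matched centers.
\item Encode letter and offset jointly by distinct patterns $p_{b,o}$, $(b,o)\in B\times U_R$, and decode by ``exactly one vote.''
\item The capacity condition $2^{|Q|}\ge|B|\,|U_R|$ holds for large $r$. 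Indeed $|Q|\ge(2^r-4)/3$ grows exponentially, while $R=D+\lceil\log_2|U_D|\rceil$ is linear in $r$. This choice of $R$ also gives $2^{R-D}\ge|U_D|$, which is what the Hall estimate needs.
\end{enumerate}
You also need the reduction from countable to finitely generated $H$, which your metric language silently assumes. The paper passes to a finitely generated nonamenable subgroup and chooses preimages coset by coset.
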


\begin{proof}
First assume that $H$ is finitely generated. Choose $a\in H$ with $a^2\ne e$; such an element exists, since a group in which every element has square $e$ is abelian and hence amenable. By the failure of the F\o lner condition, choose a finite symmetric generating set $S$ containing $e$ and $a$ such that
\begin{equation}\label{eq:expansion}
 |FS|\ge2|F|\qquad\text{for every finite $F\subset H$}.
\end{equation}
Write $U_j=S^j$ and $s=|S|$. Then
\begin{equation}\label{eq:ball-growth}
 |FU_j|\ge2^j|F|,\qquad 2^j\le|U_j|\le s^j.
\end{equation}
The role of nonamenability is precisely this uniform expansion estimate.

{\bf\smallskip\noindent\emph{Binary blocks.}}
Each block has two parts: a fixed marker, which identifies its center; and
freely chosen data bits, which carry information. 
For an integer $r\ge2$, put $K=\{a^{-1},e,a,a^2\}$. On $U_r\setminus K$, join distinct vertices $q,q'$ when $q'=qa$ or $q=q'a$. This finite graph has maximum degree at most two, so a greedy choice gives an independent set $Q$ satisfying
\begin{equation}\label{eq:payload} 
Q\subseteq U_r\setminus K,\qquad Q\cap Qa=\varnothing,\qquad |Q|\ge\frac{|U_r|-4}{3}.
\end{equation}

To construct $Q$, repeatedly choose any remaining vertex $q$ in
$U_r\setminus K$, add it to $Q$, and remove $q$ together with its remaining
neighbors $qa$ and $qa^{-1}$. Stop when no vertices remain. No two chosen
vertices are adjacent, so $Q\cap Qa=\varnothing$. Each step removes at most
three vertices, giving $|U_r\setminus K|\le 3|Q|$. Since $|K|\le4$, this yields
the bound in \eqref{eq:payload}.

Consider the binary patterns on $U_r$ that equal $1$ at $e,a$, are arbitrary on $Q$, and vanish elsewhere. There are $2^{|Q|}$ such patterns. After extension by zero outside $U_r$, each satisfies
\begin{equation}\label{eq:center-test}
 p(t)=p(ta)=1\quad\Longleftrightarrow\quad t=e.
\end{equation}
Indeed, independence excludes pairs within $Q$, the exclusion of $K$ prevents pairs between $Q$ and $\{e,a\}$, and $a^2\ne e$ makes $e$ the unique center of the pair $\{e,a\}$.

Set
\begin{equation}\label{eq:radii}
 D=2r+2,\qquad R=D+\lceil\log_2|U_D|\rceil.
\end{equation}

Here $D$ is the required separation between block centers: radius-$r$ blocks
with this separation leave a buffer that prevents a marker from straddling
two blocks. The larger radius $R$ bounds how far a letter may be moved from
its output site to its storage center. Its definition ensures
$2^{R-D}\ge |U_D|$, which is exactly the balance needed in the Hall estimate
below.
Since $R\le(2r+2)(1+\log_2s)+1$, the lower bound $|Q|\ge(2^r-4)/3$ grows exponentially in $r$, whereas $\log_2(|B|\,|U_R|)\le\log_2|B|+R\log_2s$ grows at most linearly. Fix $r$ large enough that
\begin{equation}\label{eq:capacity}
 2^{|Q|}\ge |B|\,|U_R|.
\end{equation}
Assign distinct such patterns $p_{b,o}$ to the pairs $(b,o)\in B\times U_R$, and fix a default letter $b_0\in B$.

{\bf\smallskip\noindent\emph{Local decoding.}}
We now fix the decoder before choosing any particular preimage. At an output position $g$, a site $t\in gU_R$ votes for $b$ if, for some $(b,o)\in B\times U_R$,
\begin{equation}\label{eq:vote}
 x(th)=p_{b,o}(h)\quad(h\in U_r),\qquad to=g.
\end{equation}
Define $\Phi(x)(g)$ to be the voted letter when there is exactly one vote, and $b_0$ otherwise. Since \(U_RU_r=U_{R+r}\), the value \(\Phi(x)(g)\) depends only on the coordinates of \(x\) in the finite set \(gU_{R+r}\). Thus there is a fixed local rule \(\kappa:\{0,1\}^{U_{R+r}}\to B\) such that \(\Phi(x)(g)=\kappa((x(gu))_{u\in U_{R+r}})\). Consequently, \(\Phi\) is continuous in the product topology. Moreover, for all \(k,g\in H\), we have \(\Phi(k\cdot x)(g)=\Phi(x)(k^{-1}g)=(k\cdot\Phi(x))(g)\), proving \(H\)-equivariance.

{\bf\smallskip\noindent\emph{Surjectivity.}}
It is enough to realize configurations that differ from $b_0$ at finitely
many sites, since these configurations are dense in the full shift. For such
a finite set $F$, we seek one distinct storage center within distance $R$ of
each $g\in F$. A maximal separated set supplies well-spaced candidates.
Hall's condition checks that every subset of output sites has enough
candidate centers, so that the assignments can be made simultaneously.
Let $F\subset H$ be finite and let $y\in B^H$ equal $b_0$ outside $F$. Choose a maximal $D$-separated subset $T$ of the finite set $FU_{R-D}$. For every $E\subset F$, maximality implies
\[
 EU_{R-D}\subset (T\cap EU_R)U_D,
\]
and hence
\begin{equation}\label{eq:hall-estimate}
 |T\cap EU_R|\ge\frac{|EU_{R-D}|}{|U_D|}
 \ge\frac{2^{R-D}|E|}{|U_D|}\ge|E|.
\end{equation}
The finite Hall marriage theorem therefore gives an injection $m:F\hookrightarrow T$ with $d(g,m(g))\le R$.

Indeed, maximality makes the $D$-neighborhoods of the candidates cover
$EU_{R-D}$, and each candidate covers at most $|U_D|$ sites. Expansion gives
at least $2^{R-D}|E|$ sites to cover. Hence at least $|E|$ candidates lie in
$EU_R$, as required by Hall's theorem for the relation
$g\sim t$ if $t\in T\cap gU_R$. The matching $m$ is used only to build a
preimage of the chosen target configuration.

Place $p_{y(g),\,m(g)^{-1}g}$ on each $m(g)U_r$ and fill all remaining coordinates with $0$. The offsets belong to $U_R$. Distinct blocks are at distance at least $D-2r=2$, so no pair $t,ta$ of $1$'s can cross between blocks. By \eqref{eq:center-test}, the sites satisfying $x(t)=x(ta)=1$ are exactly $m(F)$. Thus every codebook match occurs at a prescribed site, where it recovers the assigned letter and offset. Each $m(g)$ votes precisely at $g$, giving $\Phi(x)=y$.
The image of $\Phi$ therefore contains all configurations equal to $b_0$ outside a finite set. These are dense in $B^H$, and the image is compact and hence closed, so $\Phi$ is surjective.

Finally let $H$ be countable and nonamenable, without finite generation. It contains a finitely generated nonamenable subgroup $L$: otherwise all its finitely generated subgroups would be amenable, and so would their directed union. The construction for $L$ has a finite local rule
\[
 \phi(z)(\ell)=\kappa\bigl((z(\ell f))_{f\in E}\bigr),
 \qquad E\subset L\text{ finite}.
\]
Define on $H$ the same rule $\Phi(x)(h)=\kappa((x(hf))_{f\in E})$. It is $H$-equivariant. For each left coset $cL$, choose an $L$-preimage of the target configuration $\ell\mapsto y(c\ell)$, independently of the other cosets. These choices combine into an $H$-preimage. Thus $\Phi$ is surjective.
\end{proof}

\begin{remark}\label{rem:salo}
The proof adapts the sparse-marker and Hall-matching construction of Salo \cite{Salo}. We give a self-contained proof for binary input and arbitrary finite output alphabets; no theorem from \cite{Salo} is used as a black box. The proof above explicitly permits binary input and any prescribed finite output alphabet: the extra information costs only the factor $|B|$ in \eqref{eq:capacity}. We use fixed two-point markers, a separate data region, and a buffer between blocks to make recognition precise. No invariant-measure lifting or preservation of Bernoulli measure is asserted. \end{remark}

\begin{lemma}\label{lem:iterate}
Let $X$ be a compact $H$-space. If there is a factor map $(\alpha,\beta):X\to X\times X$, then
\[
 \Psi:X\longrightarrow X^{\N_0},
 \qquad \Psi(x)_n=\alpha(\beta^n(x)),
\]
is a factor map for the diagonal action.
\end{lemma}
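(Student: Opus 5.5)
We must check three properties of $\Psi$: it is continuous, it is equivariant for the diagonal action on $X^{\N_0}$, and it is surjective. Here $\alpha,\beta:X\to X$ are the coordinate maps of the given factor map. Both are continuous and $H$-equivariant, because a map into $X\times X$ with the diagonal action has these properties exactly when each coordinate does.

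\emph{Continuity and equivariance.} Each coordinate $\Psi(\cdot)_n=\alpha\circ\beta^n$ is a composition of continuous maps. Since $X^{\N_0}$ carries the product topology, $\Psi$ is continuous. For equivariance, $\beta^n(hx)=h\beta^n(x)$ by induction on $n$, and so $\alpha(\beta^n(hx))=h\,\alpha(\beta^n(x))$. This is precisely $\Psi(hx)=h\cdot\Psi(x)$ for the diagonal action.

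\emph{Surjectivity.} This is the only real step. Fix $(x_n)_{n\ge0}\in X^{\N_0}$. I first solve finite truncations. For each $N$, I build $z_N\in X$ with $\alpha(\beta^n(z_N))=x_n$ for $0\le n\le N$, by backward recursion. Start with any $w_N\in X$ satisfying $\alpha(w_N)=x_N$; this is possible because $(\alpha,\beta)$ is onto, so $\alpha$ is onto. Given $w_{k+1}$, use surjectivity of $(\alpha,\beta)$ to pick $w_k$ with $\alpha(w_k)=x_k$ and $\beta(w_k)=w_{k+1}$. Then $z_N=w_0$ works, since $\beta^n(w_0)=w_n$. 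Hence $\Psi(z_N)$ agrees with $(x_n)$ in the coordinates $0,\dots,N$.

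\emph{Passage to the limit.} The sequence $\Psi(z_N)$ converges to $(x_n)$ in the product topology, so $(x_n)$ lies in the closure of $\Psi(X)$. But $\Psi(X)$ is compact, being a continuous image of the compact space $X$, and hence closed in the Hausdorff space $X^{\N_0}$. So $(x_n)\in\Psi(X)$ and $\Psi$ is surjective. Compactness is used here in place of any direct choice of an infinite backward chain. The only subtlety to watch is keeping the recursion compatible: the constraint $\beta(w_k)=w_{k+1}$ is what lets a single point $w_0$ control all the coordinates $0,\dots,N$ at once.
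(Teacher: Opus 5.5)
Your proof is correct and follows the paper's argument. The paper uses the same backward recursion $(\alpha(w_k),\beta(w_k))=(x_k,w_{k+1})$ for finite truncations and then finishes by compactness; the only cosmetic difference is that it intersects the nested nonempty closed fibers prescribing the first $n+1$ coordinates, whereas you note that $\Psi(X)$ is compact, hence closed in $X^{\N_0}$, and contains the limit of $\Psi(z_N)$.
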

\begin{proof}
Continuity and equivariance hold coordinatewise. Given $y_0,\ldots,y_n\in X$, choose $x_n$ with $\alpha(x_n)=y_n$, and recursively choose $x_j$ with $(\alpha(x_j),\beta(x_j))=(y_j,x_{j+1})$. Then $x_0$ realizes the desired first $n+1$ coordinates. For an infinite target sequence, the closed fibers prescribing the first $n+1$ coordinates are nonempty and nested. Compactness gives a point in their intersection.
\end{proof}

\begin{corollary}\label{cor:cantor-factor}
For every nonamenable $H$, there is a factor map
\[
 \{0,1\}^H\twoheadrightarrow(\{0,1\}^{\N_0})^H.
\]
\end{corollary}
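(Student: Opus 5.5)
The plan is to combine Theorem~\ref{thm:finite-factor} with Lemma~\ref{lem:iterate}, taking $X=\{0,1\}^H$. The first step is to produce a factor map $X\to X\times X$. Apply Theorem~\ref{thm:finite-factor} with the four-letter alphabet $B=\{0,1\}\times\{0,1\}$. This gives a factor map $\Phi:\{0,1\}^H\twoheadrightarrow(\{0,1\}^2)^H$.

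Next, identify $(\{0,1\}^2)^H$ with $\{0,1\}^H\times\{0,1\}^H$ via $z\mapsto(\pi_1\circ z,\pi_2\circ z)$, where $\pi_1,\pi_2$ are the coordinate projections. This identification is a homeomorphism of product spaces. It is equivariant because the shift acts on $(\{0,1\}^2)^H$ by $(g\cdot z)(h)=z(g^{-1}h)$, and this acts coordinatewise on both components, which is exactly the diagonal action on $X\times X$. Composing gives a factor map $(\alpha,\beta):X\to X\times X$, with $\alpha,\beta$ the two component maps.

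Lemma~\ref{lem:iterate} then gives a factor map $\Psi:X\to X^{\N_0}$ for the diagonal action. The last step is the canonical identification
\[
 \bigl(\{0,1\}^H\bigr)^{\N_0}\cong\bigl(\{0,1\}^{\N_0}\bigr)^H,\qquad (x_n)_n\mapsto\bigl(h\mapsto(x_n(h))_n\bigr).
\]
This is a homeomorphism because both sides carry the product topology on $\{0,1\}^{\N_0\times H}$. It intertwines the diagonal shift on the left with the shift on the right, since $g$ acts on both sides by reindexing the $H$-coordinate by $h\mapsto g^{-1}h$. Composing yields the desired factor map $\{0,1\}^H\twoheadrightarrow(\{0,1\}^{\N_0})^H$.

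I expect no real obstacle, since all the combinatorial work is in Theorem~\ref{thm:finite-factor}. The only point needing care is bookkeeping: the identifications of function spaces must be equivariant for the precise action $(g\cdot x)(h)=x(g^{-1}h)$ used in this section. This holds because in every case $H$ acts only on the $H$-index and leaves the alphabet coordinates untouched. Nonamenability enters only through Theorem~\ref{thm:finite-factor}, and compactness of $X$, which Lemma~\ref{lem:iterate} requires, is immediate from Tychonoff's theorem.
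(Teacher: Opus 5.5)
Your proposal is correct and is the paper's proof: apply Theorem~\ref{thm:finite-factor} with $B=\{0,1\}^2$, identify $B^H$ with $X\times X$ for $X=\{0,1\}^H$, invoke Lemma~\ref{lem:iterate}, and finish with the equivariant coordinate rearrangement $X^{\N_0}\cong(\{0,1\}^{\N_0})^H$. Your explicit check that each identification intertwines the shift actions spells out what the paper states in a single line.
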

\begin{proof}
Take $B=\{0,1\}^2$ in Theorem~\ref{thm:finite-factor}, identify $B^H$ with $X\times X$ for $X=\{0,1\}^H$, and apply Lemma~\ref{lem:iterate}. The coordinate-rearrangement homeomorphism $X^{\N_0}\cong(\{0,1\}^{\N_0})^H$ is equivariant.
\end{proof}

\section{Finite lamps and the main theorem}\label{sec:finite}

In this section, the symbol $C_p$ denotes the cyclic group of order $p$, where $p$ is a
prime number:
\[
  C_p=\Z/p\Z=\{\overline0,\overline1,\ldots,\overline{p-1}\},
  \qquad
  \overline r+\overline s=\overline{r+s}.
\]
Thus the operation is addition modulo $p$.

\begin{proposition}\label{prop:binary-embedding}
Let $A=\bigoplus_{n\ge0}C_2$. For every nonamenable $H$, there is a unital embedding
\[
 \Cr(A\wrp H)\hookrightarrow\Cr(C_2\wrp H).
\]
\end{proposition}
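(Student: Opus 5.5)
The plan is to convert the factor map of Corollary~\ref{cor:cantor-factor} into an equivariant unital embedding of commutative coefficient algebras, and then pass to reduced crossed products with Lemma~\ref{lem:crossed} and the identification \eqref{eq:semidirect}.

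First, identify both sides as crossed products of commutative algebras. Since $C_2$ is abelian, $\Cr(C_2)\cong C(\{0,1\})$, where the generator corresponds by Fourier transform to the function $\varepsilon\mapsto(-1)^\varepsilon$. The infinite minimal tensor product over $H$ is therefore
\[
 \bigotimes_{h\in H}\Cr(C_2)\cong C(\{0,1\}^H),
\]
and the Bernoulli shift of tensor factors becomes the shift $(g\cdot x)(h)=x(g^{-1}h)$ of Section~\ref{sec:coding}. The same applies to $A=\bigoplus_{n\ge0}C_2$. This group is abelian with Pontryagin dual $\{0,1\}^{\N_0}$, so $\Cr(A)\cong C(\{0,1\}^{\N_0})$. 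Hence
\[
 \bigotimes_{h\in H}\Cr(A)\cong C\bigl((\{0,1\}^{\N_0})^H\bigr),
\]
again with the shift action. Equivalently, $A^{(H)}$ is abelian with dual $(\{0,1\}^{\N_0})^H$, and translation of $H$-coordinates dualizes to the shift. I would check the sign and inverse conventions for equivariance here, but they only reparametrize by a homeomorphism.

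Next, take the factor map $\Psi:\{0,1\}^H\twoheadrightarrow(\{0,1\}^{\N_0})^H$ from Corollary~\ref{cor:cantor-factor}, which applies because $H$ is nonamenable. Pullback $\Psi^*:f\mapsto f\circ\Psi$ is a unital $*$-homomorphism
\[
 C\bigl((\{0,1\}^{\N_0})^H\bigr)\to C(\{0,1\}^H).
\]
It is injective because $\Psi$ is surjective. It is $H$-equivariant because $\Psi$ is, since $\alpha_g(f)=f\circ g^{-1}$ on both sides. Lemma~\ref{lem:crossed} then gives an injective unital map
\[
 C\bigl((\{0,1\}^{\N_0})^H\bigr)\rtimes_r H\hookrightarrow C(\{0,1\}^H)\rtimes_r H.
\]
Composing with \eqref{eq:semidirect} on both ends yields
\[
 \Cr(A\wrp H)\hookrightarrow\Cr(C_2\wrp H).
\]

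The main obstacle is the bookkeeping in the first step. One must verify that the identification $\bigotimes_{h}\Cr(A)\cong C((\{0,1\}^{\N_0})^H)$ intertwines the Bernoulli action with the shift action for which $\Psi$ is equivariant. This includes the infinite tensor product of infinite tensor products being rearranged coordinatewise, as in the proof of Corollary~\ref{cor:cantor-factor}. It is a routine duality check, but it must be done with matching conventions, or else with an equivariant homeomorphism inserted to fix the mismatch. Once it holds, the rest is formal.
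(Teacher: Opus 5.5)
Your proposal is correct and follows the paper's proof essentially verbatim. Pontryagin duality identifies the coefficient algebras with $C((\{0,1\}^{\N_0})^H)$ and $C(\{0,1\}^H)$; pulling back along the factor map of Corollary~\ref{cor:cantor-factor} gives an injective unital equivariant $*$-homomorphism; and Lemma~\ref{lem:crossed} together with \eqref{eq:semidirect} finishes. Your convention check, which the paper leaves implicit, does work out, since $\widehat{h\cdot a}=\hat a\circ(h^{-1}\cdot)$ matches $\alpha_h(f)=f\circ h^{-1}$ for the shift of Section~\ref{sec:coding}.
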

\begin{proof}
Pontryagin duality gives
\[
 \widehat A=\{0,1\}^{\N_0},\qquad
 \Cr(A^{(H)})=C((\widehat A)^H).
\]
Corollary~\ref{cor:cantor-factor} induces, by pullback, an injective equivariant $*$-homomorphism
\[
 C((\widehat A)^H)\hookrightarrow C(\{0,1\}^H)
 =\Cr(C_2^{(H)}).
\]
Apply Lemma~\ref{lem:crossed} and \eqref{eq:semidirect}.
\end{proof}

\begin{lemma}\label{lem:finite-lamp-embedding}
For every nontrivial finite group $K$ and every group $H$, there is a unital embedding
\[
 \Cr(C_2\wrp H)\hookrightarrow\Cr(K\wrp H).
\]
\end{lemma}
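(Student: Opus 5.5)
The plan is to build a unital embedding $C^*(C_2)\cong\C^2\hookrightarrow\Cr(K)$, tensor it over all coordinates of $H$, and then pass to reduced crossed products through the Bernoulli description \eqref{eq:semidirect} and Lemma~\ref{lem:crossed}. This is the same pattern as Proposition~\ref{prop:infinite-embedding}, with the Haar unitary replaced by a symmetry.

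\emph{Step 1: a symmetry in $\Cr(K)$.} Since $K$ is finite, $\Cr(K)=\C[K]$ is finite-dimensional. I would pick a nontrivial projection $p\in\C[K]$ with $p\ne0,1$. One choice is the averaging projection $p=|K|^{-1}\sum_{k\in K}\lambda_k$ onto the trivial representation. Its trace is $\tau_K(p)=1/|K|\in(0,1)$, so $p\ne0,1$. The unitary $w=2p-1$ is self-adjoint with spectrum exactly $\{\pm1\}$. Hence $\lambda_{\overline1}\mapsto w$ defines a unital injective $*$-homomorphism
\[
 \iota:\Cr(C_2)\cong C(\{\pm1\})\longrightarrow\Cr(K),
\]
because a unital map out of $\C^2$ is injective exactly when both minimal projections have nonzero image.

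\emph{Step 2: infinite tensor product.} Set $\iota^{\otimes H}=\bigotimes_{h\in H}\iota$. It is well defined and injective on the minimal unital infinite tensor product: it is the inductive limit of the finite tensor products $\iota^{\otimes F}$, and each $\iota^{\otimes F}$ is injective because minimal tensor products of injective $*$-homomorphisms are injective. Compatibility with the unital connecting maps then gives injectivity on the limit. The map intertwines the Bernoulli shifts, since it applies the same $\iota$ in every coordinate.

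\emph{Step 3: crossed products.} Lemma~\ref{lem:crossed} applied to the equivariant injection $\bigotimes_H\Cr(C_2)\to\bigotimes_H\Cr(K)$ gives an injective map between the reduced crossed products by $H$. The identifications \eqref{eq:semidirect} turn this into the required unital embedding $\Cr(C_2\wrp H)\hookrightarrow\Cr(K\wrp H)$.

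There is no real obstacle. The only points needing care are that the chosen projection is genuinely nontrivial, which is where nontriviality of $K$ enters, and the routine injectivity of the infinite tensor product map, which reduces to finite stages.
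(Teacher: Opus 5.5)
Your proposal is correct and is essentially the paper's proof. The paper embeds $\C^2\cong\Cr(C_2)$ unitally via $(a,b)\mapsto ae_K+b(1-e_K)$ with the averaging projection $e_K$, which is the same map as your $\lambda_{\overline1}\mapsto 2e_K-1$; it then tensors over $H$ and applies Lemma~\ref{lem:crossed}, and your trace computation and finite-stage argument for injectivity of the infinite tensor product just spell out details the paper leaves implicit.
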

\begin{proof}
The averaging projection
\[
 e_K=\frac1{|K|}\sum_{k\in K}\lambda_k\in\Cr(K)
\]
is neither $0$ nor $1$. Thus $(a,b)\mapsto ae_K+b(1-e_K)$ embeds $\C^2\cong\Cr(C_2)$ unitally into $\Cr(K)$. Tensor the same map over $H$ and use Lemma~\ref{lem:crossed}.
\end{proof}

\begin{theorem}\label{thm:main}
Let $H$ be a countable non-exact discrete group. For every nontrivial countable discrete group $K$, the reduced group $C^*$-algebra $\Cr(K\wrp H)$ is not locally reflexive.
\end{theorem}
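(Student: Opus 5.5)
The plan is to split into two cases according to whether $K$ is infinite or finite, and in each case to embed a known non-locally-reflexive algebra into $\Cr(K\wrp H)$. We then conclude by permanence of local reflexivity under $C^*$-subalgebras.

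\emph{Infinite case.} If $K$ is infinite, the statement is exactly Corollary~\ref{cor:infinite-lamps}. It rests on Theorem~\ref{thm:cyclic} and the Haar-unitary embedding of Proposition~\ref{prop:infinite-embedding}.

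\emph{Finite case.} Assume $K$ is finite and nontrivial. First, $H$ is nonamenable, since amenable groups are exact and $H$ is non-exact. Put $A=\bigoplus_{n\ge0}C_2$, which is infinite. Corollary~\ref{cor:infinite-lamps}, applied with lamp group $A$, shows that $\Cr(A\wrp H)$ is not locally reflexive. Proposition~\ref{prop:binary-embedding} (which uses nonamenability of $H$) gives a unital embedding
\[
\Cr(A\wrp H)\hookrightarrow\Cr(C_2\wrp H).
\]
Lemma~\ref{lem:finite-lamp-embedding} gives a unital embedding
\[
\Cr(C_2\wrp H)\hookrightarrow\Cr(K\wrp H).
\]
Composing the two, $\Cr(K\wrp H)$ contains a non-locally-reflexive $C^*$-subalgebra. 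Since local reflexivity passes to $C^*$-subalgebras, $\Cr(K\wrp H)$ is not locally reflexive.

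There is no real obstacle left at this stage, since all the heavy lifting has already been done:
\begin{itemize}
\item the Roe-algebra transfer in Section~\ref{sec:roe};
\item the coding factor map of Theorem~\ref{thm:finite-factor} and Corollary~\ref{cor:cantor-factor}.
\end{itemize}
The only points to check are bookkeeping ones. One is that non-exactness implies nonamenability, which is needed to invoke Proposition~\ref{prop:binary-embedding}. The other is that the tensor-product embeddings used above are equivariant for the Bernoulli actions, so that Lemma~\ref{lem:crossed} applies and yields injective maps at the level of reduced crossed products.
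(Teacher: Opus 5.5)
Your proposal is correct and matches the paper's proof step for step. The infinite case is Corollary~\ref{cor:infinite-lamps}; in the finite case, nonamenability of $H$ gives the chain $\Cr(A\wrp H)\hookrightarrow\Cr(C_2\wrp H)\hookrightarrow\Cr(K\wrp H)$ with $A=\bigoplus_{n\ge0}C_2$, and permanence under $C^*$-subalgebras finishes the argument.

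One small imprecision: the first embedding in that chain comes from pulling back along the factor map of Corollary~\ref{cor:cantor-factor}, not from a tensor-product embedding. Proposition~\ref{prop:binary-embedding} already takes care of its equivariance, so nothing needs to change.
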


\begin{proof}
The case of infinite $K$ is Corollary~\ref{cor:infinite-lamps}. Suppose that $K$ is finite and nontrivial. Since amenable groups are exact, the non-exact group $H$ is nonamenable. Proposition~\ref{prop:binary-embedding} and Lemma~\ref{lem:finite-lamp-embedding} give
\[
 \Cr(A\wrp H)\hookrightarrow
 \Cr(C_2\wrp H)\hookrightarrow\Cr(K\wrp H),
 \qquad A=\bigoplus_{n\ge0}C_2.
\]
The first algebra is not locally reflexive by Corollary~\ref{cor:infinite-lamps}, because $A$ is infinite. Permanence under subalgebras proves the claim.
\end{proof}

\begin{corollary}\label{cor:main}
Fix a nontrivial countable exact group $K$. For every countable discrete group $H$, the following conditions are equivalent:
\begin{enumerate}
\item $H$ is exact;
\item $C_r^*(K\wrp H)$ is exact;
\item $\Cr(K\wrp H)$ is locally reflexive.
\end{enumerate}
In particular, the equivalence holds for every nontrivial finite lamp group.
\end{corollary}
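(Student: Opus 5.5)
We prove the cycle (i)$\Rightarrow$(ii)$\Rightarrow$(iii)$\Rightarrow$(i). The middle implication is just the standard fact that exact $C^*$-algebras are locally reflexive. The implication (iii)$\Rightarrow$(i) is the contrapositive of Theorem~\ref{thm:main}: if $H$ is not exact, then $\Cr(K\wrp H)$ is not locally reflexive. This uses only that $K$ is nontrivial; exactness of $K$ is not needed here.

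For (i)$\Rightarrow$(ii), we will show that $K\wrp H$ is an exact group, and then use the fact that a group is exact precisely when its reduced group algebra is exact. Write $K\wrp H=K^{(H)}\rtimes H$, which is an extension of $H$ by $K^{(H)}$. By the permanence properties recalled after \eqref{eq:semidirect}, it suffices to show that $K^{(H)}$ is exact. Now $K^{(H)}$ is the directed union, over finite subsets $F\subset H$, of the subgroups $K^F$ of functions supported in $F$. Each $K^F$ is a finite direct product of copies of the exact group $K$, so it is exact. Permanence under directed unions then gives exactness of $K^{(H)}$, and permanence under extensions gives exactness of $K\wrp H$. The hypothesis that $K$ is exact is used only in this step.

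The final sentence of the statement follows because finite groups are amenable, hence exact, so every nontrivial finite $K$ satisfies the hypotheses. No step presents a real obstacle, since all the substantive work is contained in Theorem~\ref{thm:main}. The only point that needs care is citing the extension permanence of exactness for group extensions \cite{KW} correctly, applied with normal subgroup $K^{(H)}$ and quotient $H$.
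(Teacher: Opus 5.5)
Your proposal is correct and matches the paper's proof: you obtain exactness of $K\wrp H$ from the directed union of the finite products $K^F$ and extension permanence, pass from exactness to local reflexivity, and get the converse from Theorem~\ref{thm:main}. Your remark that finite groups are amenable, hence exact, correctly justifies the final sentence, which the paper leaves implicit.
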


\begin{proof}
If $K$ and $H$ are exact, then $K^{(H)}$ is exact as a directed union of finite direct products of $K$. Exactness is preserved under extensions, so $K\wrp H$ is exact. Exactness of its reduced group $C*$-algebra implies local reflexivity. Conversely, local reflexivity forces $H$ to be exact by Theorem~\ref{thm:main}. 
\end{proof}

\section{Examples and permanence}\label{sec:applications}

\subsection{Finitely generated residually finite examples}

If $K$ and $H$ are finitely generated, so is $K\wrp H$: generators of $H$, together with generators of the lamp at the identity coordinate, generate the wreath product. Osajda constructs finitely generated residually finite non-exact groups \cite{OsajdaRF}. Fix one such group $H_{\mathrm{RF}}$.

\begin{corollary}\label{cor:rf}
For every $d\ge1$ and every prime $p$, the groups
\[
 \Z^d\wrp H_{\mathrm{RF}},\qquad C_p\wrp H_{\mathrm{RF}}
\]
are finitely generated and residually finite, and their reduced group $C^*$-algebras are not locally reflexive.
\end{corollary}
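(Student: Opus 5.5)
The corollary splits into three claims: finite generation, residual finiteness, and failure of local reflexivity. I will prove each separately.

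\emph{Finite generation.} This is the remark preceding the statement. $\Z^d$ and $C_p$ are finitely generated, and $H_{\mathrm{RF}}$ is finitely generated by Osajda's construction. Hence $K\wrp H_{\mathrm{RF}}$ is generated by the generators of $H_{\mathrm{RF}}$ together with those of the lamp at the identity coordinate.

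\emph{Failure of local reflexivity.} This is a direct application of Theorem~\ref{thm:main}. $H_{\mathrm{RF}}$ is countable and non-exact, and both $\Z^d$ and $C_p$ are nontrivial countable groups. So $\Cr(\Z^d\wrp H_{\mathrm{RF}})$ and $\Cr(C_p\wrp H_{\mathrm{RF}})$ are not locally reflexive. No further work is needed here.

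\emph{Residual finiteness.} This is the only substantive step. I would invoke Gruenberg's theorem: the restricted wreath product $K\wrp H$ with $K\neq 1$ is residually finite if and only if (a) $K$ and $H$ are residually finite, and (b) either $H$ is finite or $K$ is abelian. Both $\Z^d$ and $C_p$ are abelian and residually finite, and $H_{\mathrm{RF}}$ is residually finite, so the criterion applies.

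If I want to avoid citing Gruenberg, I would give the direct argument. Take a nontrivial element $(a,h)$ of the wreath product.
\begin{enumerate}
\item If $h\ne e$, separate $h$ in a finite quotient of $H_{\mathrm{RF}}$ and compose with the projection $K\wrp H_{\mathrm{RF}}\to H_{\mathrm{RF}}$.
\item If $h=e$, then $a\ne 0$ is a finitely supported function with support $S$. Choose a finite-index normal subgroup $N\trianglelefteq H_{\mathrm{RF}}$ that separates the points of $S$. Choose a finite quotient $K\to \bar K$ (reduction modulo some $m$ for $\Z^d$, the identity for $C_p$) in which some value $a(s)$ survives.
\item Map $K\wrp H_{\mathrm{RF}}\to \bar K\wrp (H_{\mathrm{RF}}/N)$ by summing lamp values over $N$-cosets. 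This map is a homomorphism precisely because $\bar K$ is abelian; this is where abelianness of the lamp is used.
\item Since $N$ separates the points of $S$, each coset meets $S$ in at most one point, so the image of $a$ is nonzero. The target group is finite, which completes the separation.
\end{enumerate}

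The main obstacle is only bookkeeping, namely checking that coset-summation is a well-defined equivariant homomorphism. Everything else is quoted from earlier results.
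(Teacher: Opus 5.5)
Your proposal is correct and matches the paper's proof. Finite generation comes from the generators of $H_{\mathrm{RF}}$ together with the lamp generators at the identity coordinate, and failure of local reflexivity is a direct application of Theorem~\ref{thm:main}. Residual finiteness uses the same fiber-folding homomorphism $K\wrp H_{\mathrm{RF}}\to \bar K\wrp(H_{\mathrm{RF}}/N)$, where abelianness of the lamp makes the folding multiplicative. Citing Gruenberg's theorem instead is a legitimate shortcut, since your direct argument is its abelian-lamp case.
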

\begin{proof}
Theorem~\ref{thm:main} proves failure of local reflexivity. To check residual finiteness, let $A$ be abelian and residually finite and let $H$ be residually finite. For a nonidentity element $(a,h)\in A\wrp H$, a finite quotient of $H$ separates it if $h\ne e$. If $h=e$, choose a finite quotient $H\to Q$ that is injective on the finite set $\supp(a)$, and a finite quotient $A\to A_0$ that preserves a nonidentity lamp value. Folding lamp values over fibers gives a homomorphism
\[
 A\wrp H\longrightarrow A_0\wrp Q.
\]
Abelianness makes this folding multiplicative, and injectivity on the support prevents cancellation of the selected value. The target is finite. Apply this to $A=\Z^d$ or $C_p$.
\end{proof}

\subsection{The Haagerup property and coarse embeddability}

Osajda constructs finitely generated groups acting properly on CAT(0) cubical complexes and lacking property A \cite[Theorem~2]{OsajdaSC}. Fix such a group $H_{\mathrm{Haag}}$. It has the Haagerup property and is non-exact. Cornulier, Stalder, and Valette prove that standard wreath products of countable Haagerup groups again have the Haagerup property \cite[Theorem~1.1]{CSV}.

\begin{corollary}\label{cor:haag}
For every prime $p$, the finitely generated groups
\[
 \Z\wrp H_{\mathrm{Haag}},\qquad C_p\wrp H_{\mathrm{Haag}}
\]
have the Haagerup property and coarsely embed into Hilbert space, while their reduced group $C^*$-algebras are not locally reflexive.
\end{corollary}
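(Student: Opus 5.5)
The statement has three parts: finite generation, the Haagerup property together with coarse embeddability, and failure of local reflexivity. I will treat each in turn.

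\emph{Finite generation.} Both $\Z$ and $C_p$ are finitely generated, and so is $H_{\mathrm{Haag}}$. By the remark at the start of this section, $K\wrp H_{\mathrm{Haag}}$ is generated by the generators of $H_{\mathrm{Haag}}$ together with the generators of the lamp group placed at the identity coordinate. So both groups in the statement are finitely generated.

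\emph{Haagerup property.} The lamp groups $\Z$ and $C_p$ are amenable, so they have the Haagerup property. By \cite[Theorem~2]{OsajdaSC}, $H_{\mathrm{Haag}}$ acts properly on a CAT(0) cubical complex. A proper action on a CAT(0) cube complex gives a proper conditionally negative definite function, via the wall-counting metric. Hence $H_{\mathrm{Haag}}$ is Haagerup. All groups involved are countable, so \cite[Theorem~1.1]{CSV} applies and shows that $\Z\wrp H_{\mathrm{Haag}}$ and $C_p\wrp H_{\mathrm{Haag}}$ have the Haagerup property.

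\emph{Coarse embeddability.} For a finitely generated group with a proper conditionally negative definite function $\psi$, the GNS construction gives an affine isometric action with cocycle $b$ satisfying $\|b(g)\|^2=\psi(g)$. The orbit map $g\mapsto b(g)$ is then a coarse embedding into Hilbert space, for the following reasons.
\begin{itemize}
\item[(a)] The upper control comes from $\|b(g)-b(h)\|=\|b(g^{-1}h)\|$, which is bounded on word-metric balls.
\item[(b)] The lower control comes from properness of $\psi$.
\end{itemize}
This is standard, and I would cite it rather than reprove it.

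\emph{Failure of local reflexivity.} $H_{\mathrm{Haag}}$ lacks property A, so it is non-exact by \cite{Ozawa}. Both $\Z$ and $C_p$ are nontrivial countable groups. Theorem~\ref{thm:main} therefore gives directly that $\Cr(\Z\wrp H_{\mathrm{Haag}})$ and $\Cr(C_p\wrp H_{\mathrm{Haag}})$ are not locally reflexive. For $\Z$ one can also use Theorem~\ref{thm:cyclic}.

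\emph{Main obstacle.} The only substantive point is matching the hypotheses of \cite{CSV}: that theorem requires the Haagerup property of both factors for the standard restricted wreath product. This holds here, since amenable groups are Haagerup. Everything else is a direct assembly of cited results.
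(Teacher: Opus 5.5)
Your proposal is correct and follows essentially the same route as the paper: finite generation from the generator remark, the Haagerup property via the lamp groups plus \cite[Theorem~1.1]{CSV}, coarse embeddability from the proper affine isometric action, and failure of local reflexivity from Theorem~\ref{thm:main} applied to the non-exact group $H_{\mathrm{Haag}}$. You only spell out steps the paper leaves implicit: the cocycle estimates for the coarse embedding, the Haagerup property of $H_{\mathrm{Haag}}$ from its cube complex action, and the passage from lack of property A to non-exactness.
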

\begin{proof}
Both lamp groups have the Haagerup property, so \cite[Theorem~1.1]{CSV} applies. A proper affine isometric action on a Hilbert space gives a coarse embedding for a finitely generated group. Theorem~\ref{thm:main} proves the remaining assertion.
\end{proof}

These examples show that coarse embeddability, or even the Haagerup property, does not imply local reflexivity of the reduced group algebra.

\subsection{Passing to larger groups and amenable-kernel extensions}

\begin{proposition}\label{prop:permanence}
Suppose that $\Cr(W)$ is not locally reflexive.
\begin{enumerate}
\item If $W\le G$, then $\Cr(G)$ is not locally reflexive.
\item If $N\trianglelefteq G$ is amenable and $G/N\cong W$, then $\Cr(G)$ is not locally reflexive.
\end{enumerate}
\end{proposition}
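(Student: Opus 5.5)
For part (i), I will use the canonical inclusion of reduced group $C^*$-algebras. If $W\le G$, the map $\lambda_w\mapsto\lambda_w$ extends to an isometric unital $*$-homomorphism $\Cr(W)\hookrightarrow\Cr(G)$. This is standard: $\ell^2(G)$ decomposes as a direct sum of copies of $\ell^2(W)$ over the cosets $W\backslash G$, and the left regular representation of $G$ restricted to $W$ is a multiple of $\lambda_W$. Equivalently, the canonical trace restricts, so the conditional expectation onto $\Cr(W)$ is faithful. Permanence of local reflexivity under $C^*$-subalgebras then shows that if $\Cr(G)$ were locally reflexive, so would be $\Cr(W)$, which contradicts the hypothesis.

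For part (ii), I will produce a surjection $\Cr(G)\twoheadrightarrow\Cr(G/N)$ and apply Lemma~\ref{lem:quotient-lr}. The quotient homomorphism $\pi:G\to W$ always induces $C^*(G)\to C^*(W)\to\Cr(W)$, but this generally does not factor through $\Cr(G)$. The key point is that amenability of $N$ makes it factor. The quasi-regular representation $\lambda_{G/N}$ of $G$ on $\ell^2(G/N)$ is the induced representation $\operatorname{Ind}_N^G 1_N$. Since $N$ is amenable, $1_N$ is weakly contained in $\lambda_N$. Induction preserves weak containment, so $\operatorname{Ind}_N^G1_N\prec\operatorname{Ind}_N^G\lambda_N=\lambda_G$. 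Hence $\lambda_{G/N}$, which is $\lambda_W\circ\pi$, extends to a $*$-homomorphism $\Cr(G)\to B(\ell^2(W))$ sending $\lambda_g\mapsto\lambda_{\pi(g)}$. Its image is dense in $\Cr(W)$, and the range of a $*$-homomorphism is closed, so the map is onto. Then Lemma~\ref{lem:quotient-lr} shows that local reflexivity of $\Cr(G)$ would pass to $\Cr(W)$, a contradiction.

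The main obstacle is the weak-containment step in (ii). I would make it concrete rather than cite induction theory. Take a Følner-type net of finitely supported unit vectors $\xi_i\in\ell^2(N)$ with $\langle\lambda_n\xi_i,\xi_i\rangle\to1$ for all $n\in N$. Fix a set-theoretic section $\sigma:W\to G$, so that $G=\bigsqcup_{w}\sigma(w)N$. Check that the coefficient $\langle\lambda_G(g)\eta,\eta'\rangle$ of vectors built from $\xi_i$ on each coset approximates $\langle\lambda_W(\pi(g))\zeta,\zeta'\rangle$ for finitely supported $\zeta,\zeta'\in\ell^2(W)$. This gives $\|\sum c_g\lambda_W(\pi(g))\|\le\|\sum c_g\lambda_G(g)\|$ for finite sums, which is exactly the contractivity needed. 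Alternatively, one could note $\Cr(G)\cong\Cr(N)\rtimes_{r}$ (twisted) $W$ in the split case, but the direct estimate avoids requiring a splitting.
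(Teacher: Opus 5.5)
Your proposal is correct and follows the paper's own argument. For (i), both use the canonical inclusion $\Cr(W)\hookrightarrow\Cr(G)$ and subalgebra permanence; for (ii), both use $1_N\prec\lambda_N$ induced up to $\lambda_{G/N}\prec\lambda_G$, which gives the unital surjection $\Cr(G)\twoheadrightarrow\Cr(W)$, and then apply Lemma~\ref{lem:quotient-lr}. Your explicit F\o lner-vector coefficient estimate is a valid self-contained replacement for citing continuity of induction under weak containment.
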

\begin{proof}
For (i), use the canonical inclusion $\Cr(W)\hookrightarrow\Cr(G)$. For (ii), amenability gives $1_N\prec\lambda_N$. Inducing to $G$ shows that the regular representation of $G/N$, inflated to $G$, is weakly contained in $\lambda_G$. It therefore induces a surjection $\Cr(G)\twoheadrightarrow\Cr(G/N)$. Now use quotient permanence.
\end{proof}

Finally, the main theorem, Theorem~\ref{thm:main} gives no conclusion about local reflexivity of $\Cr(H)$ for a general non-exact group $H$. In fact, for nontrivial countable exact lamps $K$, Corollary~\ref{cor:main} establishes the equivalence between the non-local reflexivity of $\Cr(K\wrp H)$ and the non-exactness of the countable discrete group $H$.

\vskip 0.5cm
\noindent
Pingchuan Dong\\
Department of Electrical Engineering\\
Columbia University\\
NewYork, NY 10027, USA\\
E-mail address: pd2827@columbia.edu

\vskip 0.5cm

\noindent
Zhe Dong\\
School of Mathematical Sciences\\
Zhejiang University\\
Hangzhou 310027, China\\
E-mail address: dongzhe@zju.edu.cn

\end{document}